\newtheorem{lemma}{Lemma}
\newtheorem{thm}{Theorem}
\newtheorem{definition}{Definition}
\newcommand\shorttitle{Some multivariate goodness of fit tests based on data depth}
\newcommand\authors{Rahul Singh, Subhajit Dutta and Neeraj Misra}
\title{Some multivariate goodness of fit tests based on data depth}
\author{Rahul Singh$^1$, Subhajit Dutta$^2$ and Neeraj Misra$^3$}
\date{
	\small{Department of Mathematics and Statistics, Indian Institute of Technology Kanpur, India}\\%
}
\begin{document}
	\maketitle
	{\let\thefootnote\relax\footnote{{ Email: $^1$sirahul@iitk.ac.in, $^2$duttas@iitk.ac.in, $^3$neeraj@iitk.ac.in}}}
	
	\begin{abstract}
		Using the fact that some depth functions characterize certain family of distribution functions, and under some mild conditions, distribution of the depth is continuous, we have constructed several new multivariate goodness of fit tests based on existing univariate GoF tests. Since exact computation of depth is difficult, depth is computed with respect to a large random sample drawn from the null distribution. It has been shown that test statistic based on estimated depth is close to that based on true depth for a large random sample from the null distribution. 
		Some two sample tests for scale difference, based on data depth are also discussed. These tests are distribution-free under the null hypothesis.
		Finite sample properties of the tests are studied through several numerical examples. A real data example is discussed to illustrate usefulness of the proposed tests.\\~\\
		\textbf{Keywords: }
			Half space depth; Multivariate Goodness of Fit; Two sample problem; Zonoid depth.\\
		\textbf{AMS subject classification: }
			62G10; 62G30.
	\end{abstract}
	
	\section{Introduction}
	Testing goodness of fit (GoF) of a given dataset w.r.t. a given probabilistic model is an essential aspect of any data analysis. Most GoF tests have been developed for univariate distribution functions (except for multivariate normality). Theoretically, Pearson's chi-square test can be used for GoF test of any multivariate distribution function. This requires division of data into disjoint classes. Such a test is sensitive to categorisation and the optimal way to decide the class boundaries is not clear. {\cite{moore1981}} suggested taking class boundaries as concentric hyper-ellipses centred at the sample mean and shape determined by inverse of the covariance matrix. However, theoretical properties of such tests are not known.
	
	Let $X_1,X_2,\ldots,X_n$ be a random sample from a population having absolutely continuous distribution $F$ on $\mathbb{R}^d$, the $d$-dimensional Euclidean space. Let $F_0$ be a specified absolutely continuous distribution $F$ on $\mathbb{R}^d$. Consider testing
	\begin{equation}\label{problem}
		H_0:F=F_0 \text{ against } H_A:F\neq F_0.
	\end{equation}
	\noindent
	Although a large number of goodness of fit tests have been proposed for specific $F_0$ (e.g. multivariate normal), only a few tests have been proposed for general $F_0$. Two popular tests for testing (\ref{problem}) are multivariate Kolmogorov-Smirnov test ({\cite{ks1997}}) and a test based on the empirical characteristic function ({\cite{fan1997}}; {\cite{cfgof2009}}).
	
	{\cite{rosenblatt1952}} proposed a multivariate generalisation of the probability integral transform. {\cite{ks1997}} utilized this to construct a multivariate Kolmogorov-Smirnov (MKS) test for testing (\ref{problem}), and provided an algorithm to implement the test procedure for dimension two. For the multivariate case, owing to the fact that the empirical distribution function has jumps not only at sample points but also at many other points, evaluation of the MKS test statistic in higher dimension can be cumbersome. The MKS test algorithm is not available for dimensions greater than two.
	
	Characteristic function characterizes the distribution function and empirical characteristic function is a consistent estimator of the true characteristic function ({\cite{ecf1977}}). Many authors including {\cite{fan1997}} and {\cite{cfgof2009}}, exploited this to construct a GoF test based on empirical characteristic function. An important advantage of multivariate GoF test based on the empirical characteristic function is that these can be used for discrete distribution functions as well as for composite null hypothesis, but computation of the test statistic and critical value is challenging. Most of these tests are implemented using bootstrapping and the test algorithm is dependent on the null distribution. This makes the test computationally quite expensive.
	
	In the statistics literature, data depth of a data point is a measure of centrality of the point with respect to the data cloud (or, a distribution function), which provides an ordering of observations of a multivariate dataset (\cite{liu1999}). There are various notions of data depth (\cite{zuo2000}), and we are interested in those which give characterization for some family of distribution functions.
	
	In this article, we aim to study GoF tests, based on data depth functions when $F$ is completely specified. {\cite{zhnag2012}} discussed some GoF tests for bivariate uniform and bivariate normal distributions, based on Mahalanobis depth and projection depth. Mahalanobis depth depends on only first two moments of distribution function which may not characterise the distribution function, while projection depth is based on outlyingness, which again depends on some statistics which may not characterize the distribution. So, these tests have very limited applicability. {\citet{li2018}} studied univariate GoF tests based on Tukey's half-space depth (and simplicial depth) and found that these tests perform better than usual GoF tests in case of scale differences.
	
	{\cite{kosh2003}} proved that the Tukey's half-space depth uniquely determines absolutely continuous distribution function with compact support, \cite{hassi2008} found that the Tukey depth characterises absolutely continuous distribution functions with connected support and density function being continuous on the interior of the support, while {\cite{kong2010}} proved that distribution with smooth depth contours is completely determined by its Tukey's half-space depth (e.g., elliptically symmetric distributions). {\cite{mos2012}} showed that zonoid depth characterises general probability distribution with finite first moment.  Since in dimension greater than one, exact value of most of depth functions are difficult to compute, the empirical depth with respect to a large random sample from null distribution, are reasonable approximations. As empirical Tukey's half-space and empirical zonoid depths converge almost surely and uniformly to their population versions respectively ({\cite{zuo2000}, \cite{dyck2016}}), these depth functions are suitable candidates for our GoF tests. 
	
	The above discussed problem of goodness of fit is also known as a one sample problem. Suppose that two samples are given from two unknown distributions. The problem of testing, whether these two distributions are the same or not, is known as the two sample problem. Here, we are interested in testing the scale, or shape difference between populations. Two popular scale difference tests were given by \cite{liu2006} and \cite{li2016}. The Kolmogorov-Smirnov (KS) test, the Cram\'er-von Mises (CvM) test and Anderson-Darling (AD) test have been extended to two sample problems. We study these tests based on data depth and compare their performance with tests proposed by \cite{liu2006} and \cite{li2016}.
	
	The paper is organized as follows. In Section 2, we briefly discuss Tukey depth and zonoid depth, and introduce some new multivariate GoF tests based on data depth and derive their properties. The finite sample performance of the proposed tests is studied through several numerical studies. A summary of the study is given in Section 3,  and conclusion of the study is given in Section 4. All proofs are given in the Appendix.
	\section{One sample GoF tests}
	Data depth of a data point is a measure of centrality of the point with respect to the data cloud (or, a distribution function). There are several notions of data depths studied in literature ({\cite{liu1999}; \cite{zuo2000}}). Here we are interested in Tukey depth ({\cite{t1955}}; {\cite{t1975}}) and zonoid depth ({\cite{zd1997}}), as these two depths satisfy desirable conditions required for characterisation of some families of distribution functions ({\cite{donoho1992}; \cite{dyck2016}}).
	
	\begin{definition}
		Tukey's half-space depth of $x\in \mathbb{R}^d$ with respect to $F$ is defined as
		$$HD_F(x):=\underset{\mathrm{ }}{\inf}\{P_F(H): H \text{ is a closed half space in } \mathbb{R}^d \text{ such that } H \ni x \},$$
		where $P_F$ denotes probability measure corresponding to the distribution function $F$.
	\end{definition}
	Tukey's half-space depth is also known as Tukey's depth or, half space depth.
	\begin{definition} Let $\alpha\in (0,1]$ and suppose $F$ has finite first moment. Take
		$$D_\alpha (F):= \left\{\int_{\mathbb{R}^d}x\ g(x)\ dF(x): g: \mathbb{R}^d\rightarrow \left[0,\frac{1}{\alpha}\right]\ and\ \int_{\mathbb{R}^d} g(x)\ dF(x)=1\right\}. $$
		Zonoid depth of $y\in \mathbb{R}^d$ with respect to $F$ is defined as follows
		$$ZD_F(y):=\begin{cases}
		\sup \{\alpha : y\in D_\alpha (F)\}, &\quad\text{if the set is non empty,}\\
		\text{0,} &\quad\text{otherwise.} \\
		\end{cases}$$
	\end{definition}
	\noindent
	The sample version of these depths are obtained by replacing $F$ by empirical distribution function $F_n$, with sample size $n$.
	
	For the testing problem (\ref{problem}), we draw a large random sample $W_1,W_2,\ldots,W_N$ from $F_0$, independent of $\{X_1,X_2,\ldots,X_n\}$. Let $D(\cdot, F)$ be a depth function with respect to distribution function $F$. Suppose $D(\cdot, F)$ satisfies following assumptions:
	\begin{itemize}
		\item[(A1)] Distribution function of $D(\cdot, F)$ is continuous.
		\item[(A2)] The sample version converges almost surely and uniformly to the population version.
	\end{itemize}
	Many commonly used depths satisfy assumptions (A1) and (A2). For absolutely continuous distribution functions, Tukey's depth
	satisfies assumptions (A1) and (A2) (\cite{donoho1992}; \cite{mass2002}). Zonoid depth also satisfies assumption (A1) and (A2), for absolutely continuous distribution functions (\cite{mos2012}; {\cite{zonoid2016}; \cite{dyck2016}}).
	
	Consider the following notation:\\
	\noindent
	$D\ \ \ :=\ D(\cdot, F_0)$, depth function with respect to distribution function $F_0$.\\
	$D_N\ :=\ D(\cdot, F_N)$, empirical depth function with respect to sample $\{W_1,W_2,\ldots,W_N\}$.\\
	$F^D\ :=\ $ distribution function of $D(W_1)$.\\
	$F_N^D\ :=\ $ ecdf of $\{D(W_1),D(W_2),\ldots,D(W_N)\}$.\\
	$G_N(x)\ :=\ \dfrac{1}{N}\sum_{i=1}^{N} \mathbb{I}(D_N(W_i)\leq D_N(x)).$\\
	$\mathbf{F}^D_{N,n}=:\pmb(F^D(D(X_1)),F^D(D(X_2)),\ldots,F^D(D(X_n))\pmb)^T$.\\
	$\mathbf{G}^D_{N,n}:=\pmb(G_N(X_1),G_N(X_2),\ldots,G_N(X_n)\pmb)^T$.
	\begin{thm}[\cite{liu1993}]\label{thm1}
		Under the null hypothesis $H_0$ and assumption (A1),
		\begin{align*}
			\mathbf{F}^D_{N,n}\stackrel{d}{=} (U_1,U_2,\ldots,U_n),\text{ where }U_i\stackrel{iid}{\sim}U[0,1].
		\end{align*}
	\end{thm}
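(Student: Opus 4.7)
The plan is to reduce the statement to the classical probability integral transform (PIT) applied coordinate-wise. The essential observations are that under $H_0$ the $X_i$ are i.i.d.\ from $F_0$ (hence have the same joint law as the $W_j$), and that $F^D$ is by definition the CDF of $D(W_1) = D(W_1, F_0)$, which under (A1) is continuous.

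First I would note that since $D = D(\cdot, F_0)$ is a fixed (deterministic) measurable function of its argument, and under $H_0$ the observations $X_1, \ldots, X_n$ are i.i.d.\ with the same distribution $F_0$ as $W_1$, the random variables $D(X_1), \ldots, D(X_n)$ are i.i.d.\ with common distribution function $F^D$. Second, assumption (A1) gives that $F^D$ is continuous, so the scalar PIT applies: if $Y$ has continuous CDF $F^D$, then $F^D(Y) \sim U[0,1]$. Applying this marginally to each coordinate, $F^D(D(X_i)) \sim U[0,1]$ for every $i$. Finally, independence of the $X_i$'s is preserved under the componentwise measurable map $x \mapsto F^D(D(x))$, so the $F^D(D(X_i))$ remain mutually independent. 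Combining these three facts yields the claimed joint equality in distribution.

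The only subtlety worth stressing is that $D(\cdot, F_0)$ and $F^D$ are both determined by $F_0$ alone (they do not involve the auxiliary sample $W_1, \ldots, W_N$), so the argument is unaffected by whatever randomness is used to compute the empirical depth $D_N$. In particular, the statement concerns only $\mathbf{F}^D_{N,n}$ and not $\mathbf{G}^D_{N,n}$, so no uniform convergence from (A2) is needed at this stage; (A2) only becomes relevant when one later wants to replace $F^D \circ D$ by its empirical analogue $G_N$. There is no real obstacle here beyond carefully invoking (A1) to legitimize the PIT; the result is essentially a multivariate rephrasing of Liu's observation that a continuous depth transform of an i.i.d.\ sample yields i.i.d.\ uniforms.
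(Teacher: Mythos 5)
Your argument is correct. The paper itself gives no proof of this statement --- it is quoted as a known result from Liu and Singh (1993), and the cited result is established by exactly the probability integral transform argument you give: under $H_0$ the $D(X_i)$ are i.i.d.\ with the continuous CDF $F^D$ guaranteed by (A1), so the $F^D(D(X_i))$ are i.i.d.\ $U[0,1]$; your observations that (A2) and the auxiliary sample $W_1,\ldots,W_N$ play no role here are also accurate.
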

	Using {Theorem \ref{thm1}}, under the null hypothesis $H_0$, distribution of any measurable function of $F^D_{N,n}$ doesn't depend on $F$. When $D$ determines the distribution function uniquely, the testing problem reduces to
	\begin{align}
		H_0:\ F^D(X_i)\stackrel{iid}{\sim}\ U[0,1] \text{ against } H_A:\ F^D(Y_i)\stackrel{iid}{\nsim}\ U[0,1],~i=1,2,\ldots,n.
	\end{align}
	Several GoF tests for $U[0,1]$ are known to have good properties e.g. Kolmogorov-Smirnov (KS), Anderson-Darling (AD), Cramer-von Mises (CvM), Greenwood tests, etc. (see {\cite{d1986}}). Since exact computation of most of depth functions is difficult, $F^D_{N,n}$ is unobservable. Depth functions satisfying assumptions (A1) and (A2),
	$G^D_{N,n}$ can be used as an approximation of $F^D_{N,n}$ for large $N$. So, $G^D_{N,n}$ is close to a random sample from $U[0,1]$, and hence can be used for the testing (\ref{problem}). We have utilised this idea to construct new GoF tests for absolutely continuous distribution functions based on data depth.
	
	Suppose we have a random sample $X_1,X_2,\ldots,X_n$ from $F$ and wish to test $H_0:F=F_0$ against $H_A:F\neq F_0$. The steps of proposed tests are as follows.
	\subsection*{Steps of Test}
	\begin{itemize}
		\item[1. ]  Fix a large $N$ and take a large sample $W_1,W_2,\ldots,W_N$ from $F_0$. Let $D(\cdot,F_0)$ be Tukey's depth or zonoid depth with respect to the distribution function $F_0$ and $D_N(\cdot)\ =D(\cdot,F_N)$ be the corresponding sample depth with respect to the sample $\{W_1,W_2,\ldots,W_N\}$.
		\item[2. ] Compute $\{D_N(W_1),D_N(W_2),\ldots,D_N(W_N)\}$ and $\{D_N(X_1),D_N(X_2),\ldots,D_N(X_n)\}$.
		\item[3. ] Compute $G_N(X_j)=\ \dfrac{1}{N}\sum_{i=1}^{N} \mathbb{I}(D_N(W_i)\leq D_N(X_j))$ for $j= 1,2,\ldots,n$.
		\item[4. ] Let $G^{(1)}, G^{(2)}, \ldots,G^{(n)} $ be the order statistics corresponding to $G_N(X_j)$, $j= 1,2,\ldots,n$. Then, the proposed depth based test statistics are given by the following expressions,
		\begin{table*}[htb]
			\centering
			\begin{tabular}{c c }
				KS type test statistic & $dKS_n= \underset{u\in [0,1]}{\sup}\bigg|\dfrac{1}{n}\sum_{j=1}^{n}\mathbb{I}\left(G^{(j)}\leq u\right)-u\bigg|$\\~\\
				CvM type test statistic & $dCvM_n= \frac{1}{12n}+\sum_{j=1}^{n}\pmb[\frac{2j-1}{2n}-G^{(j)}\pmb]$\\~\\
				AD type test statistic & $dAD_n= -n-\sum_{j=1}^n\frac{2j-1}{n}\pmb[\log(G^{(j)})-\log(G^{(n-j+1)})\pmb]$\\~\\
				Greenwood type test statistic & $dGD_n=\frac{1}{n} \sum_{j=1}^{n}\pmb[n(G^{(j)}-G^{(j-1)})\pmb]^2$.
			\end{tabular}
		\end{table*}
		\FloatBarrier
		\item[5.] The computed test statistic in the last step, can be compared with the critical value of the corresponding test statistic for $U[0,1]$ to arrive at a decision.
	\end{itemize}
	Our test procedure translates the multivariate GoF problem to testing uniformity on $[0,1]$. An advantage of this procedure is that it remains unaffected by the dimension of the data.
	
	\subsection{Theoretical Results}
	Using Theorem \ref{thm1}, any statistic based on $\mathbf{F}^D_{N,n}$ is distribution-free. For testing uniformity, several tests based on $\mathbf{F}^D_{N,n}$ are known to have good properties, e.g., KS, AD, CvM, Greenwood test statistics, etc. (see {\cite{d1986}}). Due to practical considerations, we propose to use $\mathbf{G}^D_{N,n}$ in place of $\mathbf{F}^D_{N,n}$. The following result gives a relation between these two.
	
	\begin{thm}\label{thm2}
		Under assumptions (A1) and (A2), for finite n,
		$$||\mathbf{F}^D_{N,n}-\mathbf{G}^D_{N,n}||\xrightarrow[N\to \infty]{a.s.}\ 0,$$
		where $||\cdot||$ denotes the Euclidean norm.
	\end{thm}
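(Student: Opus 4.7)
The plan is to reduce to componentwise almost sure convergence, since $n$ is finite and the Euclidean norm of an $n$-vector vanishes precisely when every coordinate does. Fix $j\in\{1,\ldots,n\}$ and write $\tilde F_N(t):=\frac{1}{N}\sum_{i=1}^N \mathbb{I}(D_N(W_i)\leq t)$, so that $G_N(X_j)=\tilde F_N(D_N(X_j))$. The triangle inequality yields
$$|F^D(D(X_j))-G_N(X_j)|\leq \sup_{t\in\mathbb{R}}|\tilde F_N(t)-F^D(t)|+|F^D(D_N(X_j))-F^D(D(X_j))|.$$
The second summand vanishes almost surely: assumption (A2) gives $D_N(X_j)\to D(X_j)$ a.s., and (A1) gives continuity of $F^D$. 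Thus everything reduces to showing $\sup_t|\tilde F_N(t)-F^D(t)|\to 0$ a.s.

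To handle this uniform term I would introduce the \emph{unperturbed} empirical CDF $F_N^D(t):=\frac{1}{N}\sum_{i=1}^N\mathbb{I}(D(W_i)\leq t)$, built from the i.i.d.\ sample $D(W_1),\ldots,D(W_N)$ whose common distribution is $F^D$. By the classical Glivenko--Cantelli theorem, $\delta_N:=\sup_t|F_N^D(t)-F^D(t)|\to 0$ almost surely. It remains to control $\sup_t|\tilde F_N(t)-F_N^D(t)|$. Setting $\epsilon_N:=\sup_{x\in\mathbb{R}^d}|D_N(x)-D(x)|$, which tends to $0$ a.s.\ by (A2), one observes that $\mathbb{I}(D_N(W_i)\leq t)$ and $\mathbb{I}(D(W_i)\leq t)$ can disagree only when $D(W_i)\in[t-\epsilon_N,t+\epsilon_N]$. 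Summing over $i$ gives
$$\sup_t|\tilde F_N(t)-F_N^D(t)|\leq \sup_t\bigl[F_N^D(t+\epsilon_N)-F_N^D((t-\epsilon_N)^-)\bigr].$$

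The final step is to insert $F^D$ into the right-hand side via the triangle inequality, bounding it by $2\delta_N+\sup_t\bigl[F^D(t+\epsilon_N)-F^D(t-\epsilon_N)\bigr]$. The first piece tends to $0$ a.s.\ by Glivenko--Cantelli. For the second piece, $F^D$ is a continuous CDF on $\mathbb{R}$ by (A1), hence uniformly continuous; since $\epsilon_N\to 0$ a.s., its modulus of continuity at $2\epsilon_N$ tends to $0$ almost surely. Combining the above displays and then summing the bounds over the $n$ coordinates delivers the desired Euclidean-norm conclusion. I expect the main difficulty to be exactly this uniform-in-$t$ control: the disagreement slab $[t-\epsilon_N,t+\epsilon_N]$ depends on $t$, and only the uniform continuity of $F^D$ (a consequence of (A1) combined with $F^D$ being a CDF on all of $\mathbb{R}$) lets one close the argument uniformly.
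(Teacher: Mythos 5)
Your proof is correct, and it follows the same broad skeleton as the paper's (triangle inequality through an intermediate quantity, Glivenko--Cantelli for the empirical CDF of the true depths, uniform a.s.\ convergence of $D_N$ to $D$ from (A2), and continuity of $F^D$ from (A1)), but the decomposition and the key technical step differ. The paper pivots through the vector $\bigl(F^D_N(D(X_j))\bigr)_j$, handling the first gap by Glivenko--Cantelli (in fact by Chung's law of the iterated logarithm, which is overkill for finite $n$ but lets their Lemma~1 accommodate $n=o(N/\log\log N)$) and the second gap by a pointwise Lemma~2 asserting that $\frac{1}{N}\sum_i\{\mathbb{I}[D_N(W_i)\leq D_N(x)]-\mathbb{I}[D(W_i)\leq D(x)]\}\to 0$ a.s.\ because each summand tends to $0$. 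You instead pivot through $F^D(D_N(X_j))$ and prove a \emph{uniform} statement, $\sup_t|\tilde F_N(t)-F^D(t)|\to 0$ a.s., via the slab bound $\sup_t[F_N^D(t+\epsilon_N)-F_N^D((t-\epsilon_N)^-)]$ and the uniform continuity of the continuous CDF $F^D$. This is the stronger route: the paper's Lemma~2 averages $N$ terms whose number grows with $N$, so ``each summand vanishes'' does not by itself justify the conclusion for the triangular array; your counting argument --- disagreement of the two indicators forces $D(W_i)$ into a shrinking slab whose $F^D$-mass is controlled by the modulus of continuity --- is exactly the detail needed to make that step rigorous. The only thing the paper's version buys in exchange is the explicit rate from the LIL, which matters for growing $n$ but not for the finite-$n$ statement of Theorem~2.
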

	{Theorem \ref{thm2}} says that for finite $n$, the distance between $\mathbf{G}^D_{N,n}$ and $\mathbf{F}^D_{N,n}$ converges almost surely to zero as $N\to \infty$.
	\begin{thm}\label{thm3}
		Under assumptions (A1) and (A2), and for finite $n$ , test statistics $dKS_n$, $dCvM_n$, $dAD_n$ and $dGD_n$ converge almost surely to KS, CvM, AD and Greenwood test statistics based on $\mathbf{F}^D_{N,n}$ respectively, as $N\to \infty$.
	\end{thm}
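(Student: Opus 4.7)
My plan is to realize each of the four test statistics as a (piecewise) continuous functional of the vector of order statistics of $\mathbf{G}^D_{N,n}$, and to observe that the same functional, applied to the order statistics of $\mathbf{F}^D_{N,n}$, produces the corresponding target statistic. A rearrangement-inequality bound will transfer the Euclidean-norm convergence of Theorem~\ref{thm2} from the raw vectors to their sorted versions, and continuity of each functional will then finish the proof.

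Concretely, write $F^{(1)}\le\cdots\le F^{(n)}$ for the sorted entries of $\mathbf{F}^D_{N,n}$ and $G^{(1)}\le\cdots\le G^{(n)}$ for those of $\mathbf{G}^D_{N,n}$. The sorting map is nonexpansive in $\ell_2$: the rearrangement inequality yields $\sum_j a^{(j)}b^{(j)}\ge \sum_j a_j b_j$, whence
\[
\sum_{j=1}^n \bigl(G^{(j)}-F^{(j)}\bigr)^2 \;\le\; \|\mathbf{G}^D_{N,n}-\mathbf{F}^D_{N,n}\|^2,
\]
and Theorem~\ref{thm2} drives the right-hand side to $0$ almost surely. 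In particular $\max_{1\le j\le n}|G^{(j)}-F^{(j)}|\xrightarrow{\mathrm{a.s.}} 0$ as $N\to\infty$.

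For $dKS_n$, $dCvM_n$ and $dGD_n$, each test statistic is a polynomial (or, in the case of $dKS_n$, the maximum of finitely many affine functions, as becomes transparent on rewriting the supremum as $\max_{1\le j\le n}\max(|j/n-G^{(j)}|,|(j-1)/n-G^{(j)}|)$) in the order statistics, hence a Lipschitz functional on $[0,1]^n$. Replacing $G^{(j)}$ by $F^{(j)}$ in each formula gives the target $KS_n$, $CvM_n$, $GD_n$ based on $\mathbf{F}^D_{N,n}$. Combining the Lipschitz bound with the order-statistic convergence above yields the three almost sure convergences.

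The main obstacle is $dAD_n$, whose functional contains $\log G^{(j)}$ and therefore fails to be continuous at the boundary of $[0,1]^n$. I would treat it pathwise: assumption (A1) together with continuity of the distribution of $D(X_j)$ under $F$ gives $F^D(D(X_j))\in(0,1)$ almost surely, so $F^{(1)}>0$ and $F^{(n)}<1$ almost surely. On the full-measure event where this holds and $\max_j|G^{(j)}-F^{(j)}|\to 0$, one can choose a (random) $\varepsilon\in(0,1/2)$ and an index $N_0$ such that $G^{(j)},F^{(j)}\in[\varepsilon,1-\varepsilon]$ for every $j$ and every $N\ge N_0$. Since $\log$ is Lipschitz on this compact subinterval of $(0,1)$, the AD functional is Lipschitz on $[\varepsilon,1-\varepsilon]^n$, and the previous Lipschitz-functional argument closes the proof for the AD statistic as well.
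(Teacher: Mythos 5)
Your proof is correct, and it takes a genuinely different route from the paper's. The paper argues through the empirical processes $u\mapsto\frac{1}{n}\sum_i\mathbb{I}[F^D(D(X_i))\leq u]$ and $u\mapsto\frac{1}{n}\sum_i\mathbb{I}[G_N(X_i)\leq u]$: it bounds the difference of the two scaled KS functionals, pointwise in $u$, by the number of indices $i$ at which the two indicators disagree, sends that to zero via Theorem~\ref{thm2} and a Lemma~\ref{lemma2}--type argument, and then invokes the continuous mapping theorem and Slutsky's lemma, dispatching the CvM, AD and Greenwood statistics with ``follows in a similar way.'' You instead work directly on the order-statistic vector: the rearrangement-inequality observation that sorting is nonexpansive in $\ell_2$ transfers the conclusion of Theorem~\ref{thm2} to $\max_{1\le j\le n}|G^{(j)}-F^{(j)}|\to 0$ almost surely, after which each statistic is a (locally) Lipschitz functional of the sorted vector. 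Your route delivers two things the paper leaves implicit: uniformity in $u$ for the KS statistic comes for free from the finite-max representation, whereas the paper's displayed bound is only pointwise in $u$; and the logarithmic singularity of the AD statistic at $G^{(j)}\in\{0,1\}$ is confronted head-on via the pathwise localisation to $[\varepsilon,1-\varepsilon]^n$, a point the paper glosses over entirely. One caveat: your claim that $F^D(D(X_j))\in(0,1)$ almost surely is immediate under $H_0$ (where it is $U[0,1]$ by Theorem~\ref{thm1}), but under an alternative $F$ it is an additional, albeit mild, requirement that the law of $D(X_j)$ not charge the set where $F^D$ equals $0$ or $1$; since the paper's own proof is silent on this same issue, it is not a defect relative to the published argument, but it would be worth stating as a hypothesis if the AD case is meant to hold under alternatives as well.
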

	Using {Theorem \ref{thm3}}, we have for finite $n$ some common test statistics based on $\mathbf{G}^D_{N,n}$ converge almost surely to those based on $\mathbf{F}^D_{N,n}$ as $N\to \infty$.
	
	If the depth $D$ determines distribution function uniquely, then KS, AD and CvM tests based on $\mathbf{F}^D_{N,n}$ are consistent for $F\neq F_0$ (see, e.g., {\cite{dasgupta2008}}). Also if depth $D$ determines distribution function uniquely, then Greenwood tests based on $\mathbf{F}^D_{N,n}$ is asymptotically locally most powerful among symmetric spacings tests based on $\mathbf{F}^D_{N,n}$ ({\cite{sethu1970}}). Spacings based on $\mathbf{F}^D_{N,n}$ correspond to depth spacings defined by {\cite{li2008}}.
	
	Observe that {Theorems \ref{thm2} and \ref{thm3}} hold even if $d>n$, which is unusual in multivariate procedures. Because the depth is computed with respect to simulated sample $W_1,W_2,\ldots,W_N$ and {Theorems \ref{thm2}-\ref{thm3}} require $N\to\infty$, and we can choose $N$. Generally, multivariate procedures are valid only for $d<n$.
	\vspace*{-0.1in}
	\section{Two sample tests}
	In the multivariate setup, \cite{liu1993} introduced the rank sum test based on data depth. \cite{liu1999} proposed depth-depth plot and thereafter, many two sample tests were proposed in the literature based on depth-depth plot (\cite{li2004}, \cite{dovo2014}). These tests are mostly based on permutations. Depth-depth plot based tests are useful to detect change in location or scale. There are some other depth based tests discussed in the literature (\cite{liu1993}, \cite{chenori2011}). \cite{li2011} discussed some statistics of type $KS$ and $CvM$ and permutation based tests thereof.
	
	Let $X_1,X_2,\ldots,X_n$ and $Y_1,Y_2,\ldots,Y_m$ be independent random samples from $d$-dimensional unknown continuous distributions $F$ and $G$, respectively. We aim to test the null hypothesis
	\begin{align*}
		H_0: F(x)=G(x)\ \forall x\in\mathbb{R}^d \text{ against }H_A:F(x)\neq G(x)\text{ for some } x\in\mathbb{R}^d.
	\end{align*}
	\noindent
	If there is a scale difference between distributions $F$ and $G$ (in the sense that $G$ has higher dispersion than $F$), then $X_i$'s are more likely to be concentrated near geometrical centre of the joint sample and $Y_i$'s are more likely to be spread towards outer positions of the cluster of the joint data. This observation guides us to construct tests for scale difference by using some measure of outlyingness of sample points with respect to a data cloud.
	
	Let us denote the joint sample as $Z_1,Z_2,\ldots,Z_N$, where $N=n+m$. Let $D$ be a given depth function, and $D(\cdot,H_N)$ denote the depth with respect to the joint sample. Denote:
	\begin{itemize}
		\item[ ] $X_i^*=D(X_i,H_N)$ for $i=1,2,\ldots,n$;
		\item[ ] $Y_i^*=D(Y_i,H_N)$ for $i=1,2,\ldots,m$ and
		\item[ ] $Z_i^*=D(Z_i,H_N)$ for $i=1,2,\ldots,N$.
	\end{itemize}
	Define $F_n^*$, $G_m^*$ and $H_N^*$ as
	\begin{align*}
		F_n^*(t)=&\frac{1}{n}\sum_{i=1}^{n}\mathbb{I}(X_i^*\leq t),~~
		G_m^*(t)=\frac{1}{m}\sum_{i=1}^{m}\mathbb{I}(Y_i^*\leq t),\\ 
		\text{ and }H_N^*(t)=&\frac{1}{N}\sum_{i=1}^{N}\mathbb{I}(Z_i^*\leq t),\ t\in\mathbb{R}.
	\end{align*}
	We can define two sample $KS$, $CvM$ and $AD$ statistics based on $F_n^*$ and $G_m^*$, as follows.
	\begin{itemize}
		\item[(i) ] Two sample KS test statistic based on data depth:
		\begin{align*}
			\tilde{dKS}=&\sup_{t\in\mathbb{R}}\big|F_n^*(t)-G_m^*(t)\big|=\max_{1\leq i\leq N}\big|F_n^*(Z_i^*)-G_m^*(Z_i^*)\big|.
		\end{align*}
		\item[(ii) ] Two sample CvM test statistic based on data depth:
		\begin{align*}
			\tilde{dCvM}=&\frac{nm}{n+m}\int_{-\infty}^{\infty}\left(F_n^*(t)-G_m^*(t)\right)^2dH_N^*(t)
			=\frac{nm}{(n+m)^2}\sum_{i=1}^{N}(F_n^*(Z_i^*)-G_m^*(Z_i^*))^2.
		\end{align*}
		\item[(iii) ] Two sample AD test statistic based on data depth:
		\begin{align*}
			\tilde{dAD}=&\frac{nm}{n+m}\int_{-\infty}^{\infty}\left(F_n^*(t)-G_m^*(t)\right)^2\frac{1}{H_N^*(t)(1-H_N^*(t))}dH_N^*(t)\\
			=&\frac{nm}{(n+m)^2}\sum_{i=1}^{N}(F_n^*(Z_i^*)-G_m^*(Z_i^*))^2\frac{1}{H_N^*(Z_i^*)(1-H_N^*(Z_i^*))}.
		\end{align*}
	\end{itemize}
	Suppose that the ranks of $\{X_1^*,X_2^*,\ldots,X_n^*\}$ and $\{Y_1^*,Y_2^*,\ldots,Y_m^*\}$ in the joint sample $\{Z_1^*,Z_2^*,\ldots,Z_N^*\}$ be $\{R_{11},R_{12},\ldots,R_{1n}\}$ and $\{R_{21},R_{22},\ldots,R_{2n}\}$, respectively. Using the information from ranks, the above mentioned statistics can be re-written as
	\begin{align*}
		\tilde{dKS}=&\max_{1\leq j\leq N}\bigg|\frac{1}{n}\sum_{i=1}^{n}\mathbb{I}(R_{1i}\leq j) -\frac{1}{m}\sum_{i=1}^{m}\mathbb{I}(R_{2i}\leq j)\bigg|,\\
		\tilde{dCvM}=&\frac{nm}{(n+m)^2}\sum_{j=1}^{N} \left(\frac{1}{n}\sum_{i=1}^{n}\mathbb{I}(R_{1i}\leq j) -\frac{1}{m}\sum_{i=1}^{m}\mathbb{I}(R_{2i}\leq j) \right)^2,\\
		\tilde{dAD}=&\frac{nm}{(n+m)^2}\sum_{j=1}^{N-1}\frac{1}{j(N-j)} \left(\frac{1}{n}\sum_{i=1}^{n}\mathbb{I}(R_{1i}\leq j) -\frac{1}{m}\sum_{i=1}^{m}\mathbb{I}(R_{2i}\leq j) \right)^2
	\end{align*}
	
	Depth ranking may lead to ties and hence tie braking becomes important. We use random tie breaking scheme for numerical study. Other tie breaking scheme discussed by \cite{liu2006} can also be used. Under random tie breaking scheme we have following result.
	\begin{thm}
		Under $H_0$, statistics $\tilde{dKS}$, $\tilde{dCvM}$ and $\tilde{dAD}$ are distribution-free and have the same distribution as usual two sample $KS$, $CvM$ and $AD$ test statistics, respectively.
	\end{thm}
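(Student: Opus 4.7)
The plan is to reduce the distribution of each depth-based statistic, under $H_0$, to the null distribution of the corresponding classical two-sample rank statistic by an exchangeability argument, and then invoke the rank-form expressions already displayed in the excerpt. Under $H_0$ we have $F=G$, so the pooled sample $Z_1,\ldots,Z_N$ is i.i.d.\ from the common continuous distribution, hence exchangeable. The empirical distribution $H_N$ is a symmetric functional of $(Z_1,\ldots,Z_N)$ (permuting the points leaves $H_N$ unchanged), so $D(\cdot,H_N)$ is a (random) function that depends symmetrically on the pooled data. Therefore the transformed vector $(Z_1^\ast,\ldots,Z_N^\ast)=(D(Z_1,H_N),\ldots,D(Z_N,H_N))$ is exchangeable as well.

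Next, I would argue that, under random tie-breaking, the vector of ranks $(R_{11},\ldots,R_{1n},R_{21},\ldots,R_{2m})$ of the $X_i^\ast$'s and $Y_j^\ast$'s in the pooled sample is uniformly distributed over the $N!$ permutations of $\{1,\ldots,N\}$; equivalently, the subset of positions labelled as coming from the $X$-sample is uniform over the $\binom{N}{n}$ such subsets. This is the standard consequence of exchangeability: for any permutation $\pi$ of $\{1,\ldots,N\}$, the vector $(Z_{\pi(1)}^\ast,\ldots,Z_{\pi(N)}^\ast)$ has the same joint law as $(Z_1^\ast,\ldots,Z_N^\ast)$. The random tie-breaking step is precisely what allows one to upgrade this to uniformity on permutations even when $D(\cdot,H_N)$ produces ties (which it typically does for depth computed against a finite sample).

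Once the rank vector has been shown to be uniform over permutations, the rank-based formulas for $\tilde{dKS}$, $\tilde{dCvM}$ and $\tilde{dAD}$ displayed just before the statement are, as written, identical to the rank-based representations of the classical two-sample Kolmogorov--Smirnov, Cram\'er--von Mises and Anderson--Darling statistics computed on a pooled i.i.d.\ continuous sample (where random tie-breaking is vacuous because ties have probability zero). Since each statistic is a fixed functional of the rank vector, and the rank vector has the uniform-on-permutations distribution in both settings, the null distributions coincide. Distribution-freeness is automatic from the same argument, as the uniform distribution on permutations does not depend on $F$.

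The main technical point is the handling of ties: the depth $D(\cdot,H_N)$ can produce genuine ties among $Z_i^\ast$'s with positive probability, so one cannot simply appeal to continuity as in the classical derivation. I would therefore make the random tie-breaking explicit by introducing auxiliary i.i.d.\ $U[0,1]$ variables $U_1,\ldots,U_N$, independent of everything else, and ordering on the pair $(Z_i^\ast,U_i)$ lexicographically; exchangeability of $(Z_i^\ast,U_i)_{i=1}^N$ then yields uniformity of the induced ranks, and the rest of the argument goes through verbatim. Everything else is routine bookkeeping to match the two rank-form expressions.
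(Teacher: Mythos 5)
Your proposal is correct and follows essentially the same route as the paper: both arguments reduce the claim to showing that, under $H_0$, the rank vector $(R_{11},\ldots,R_{1n},R_{21},\ldots,R_{2m})$ is uniformly distributed over the permutations of $\{1,\ldots,N\}$, after which the displayed rank-form expressions immediately identify the statistics with their classical counterparts. The paper states this uniformity in one line, whereas you supply the supporting details it leaves implicit --- the exchangeability of $(Z_1^\ast,\ldots,Z_N^\ast)$ coming from the symmetric dependence of $D(\cdot,H_N)$ on the pooled sample, and the explicit lexicographic formalization of random tie-breaking --- so your write-up is a fleshed-out version of the same argument rather than a different one.
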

	Theorem 4 conveys that the proposed test statistics $\tilde{dKS}$, $\tilde{dCvM}$ and $\tilde{dAD}$ are distribution-free and have same distribution as usual two sample $KS$, $CvM$ and $AD$ test statistics, respectively.
	\section{Numerical Study}
	For assessing the finite sample performances of the proposed tests, we perform following simulation studies. Empirical powers are reported based on $1000$ replicates.
	\subsection{One sample GoF tests}
	In this section, we report results on the small sample performance of proposed tests. We compare their empirical powers with those of multivariate KS (MKS) test ({\cite{ks1997}}) and empirical characteristic function (ECF) based GoF proposed by {\cite{cfgof2009}} for bivariate cases for which algorithms are available. We use an algorithm proposed by {\cite{cfgof2009}} for ECF based GoF test for testing Morgenstern's bivariate distribution. For this purpose, we use true parameter value in place of estimated one. The ECF based GoF test is denoted by ECFT. Implementation of the MKS test for $d=2$ is carried out using the algorithm provided by {\cite{ks1997}}, and we denote this test by MKST. For dKS and dCvM tests based on $G^D_{N,n}$, estimated Type-I error rates are close to $0.05$ at level $0.05$ (see Table 1,3 and 6) for $N=5000$, other test statistics (AD, Greenwood, etc.) require larger $N$ for this. So we consider only KS and CvM test for empirical power study. We denote $dKS$ and $dCvM$ test based on Tukey's depth and zonoid depth by $tdKS$ and $tdCvM$, and $zdKS$ and $zdCvM$, respectively.
	
	Indeed, the power of any GoF test depends on the sample size. For numerical power study, we consider sample sizes $n=10,\ 25,\ 50,\ 100$ and $200$.\\~\\
	\textit{Example 1:}
	Here, we consider $N_2(\textbf{0},I_2)$ as the null distribution. Empirical powers are computed against alternatives $N_2(\mu,I_2)$, $N_2(\mathbf{0},\Sigma)$, $N_2(\mu, \Sigma)$, $t_1(\mathbf{0},I_2)$, $t_5(\textbf{0},I_2)$ and $N_2(\textbf{0},\Sigma_1)$, where $$\mathbf{0}= \begin{pmatrix}
	0 \\
	0 \\
	\end{pmatrix},\ \mu= \begin{pmatrix}
	1 \\
	1 \\
	\end{pmatrix},\ I_2= \begin{pmatrix}
	1 & 0 \\
	0 & 1 \\
	\end{pmatrix},\text{ and } \Sigma= \begin{pmatrix}
	1.5 & 0 \\
	0 & 1.5 \\
	\end{pmatrix}.$$
	Empirical powers of these are compared with those of the MKS test. Empirical power of MKS test is computed using the algorithm provided by {\cite{ks1997}}. ECF based test algorithm  is not available for this case.
	
	Estimated Type-I error rates for the proposed tests are given in {Table 1}, which remains close to the nominal value; estimated powers against the aforementioned alternatives are given in {Table 2}. For location difference, MKST performs better than the proposed tests for small sample size, but for large samples, the proposed tests have empirical powers close to that of MKST. For heavy-tailed alternatives (like multivariate Cauchy and multivariate t distributions), the proposed tests are superior to MKST.
	Empirical powers of GoF tests (KS and CvM) based on half space depth and zonoid depth are quite similar and it can not be concluded that GoF test based on which data depth is superior. But, the performance of CvM test based on data depth  is superior to the KS test based on data depth (half space or zonoid).
	\begin{table}
		\centering
		\caption{ {Estimated Type-I error probability for testing $N_2(\mathbf{0}, I_2)$ at level $0.05$.}}
		\begin{tabular}{c c c c c c }
			\hline
			Test & n=10 & 25 & 50 & 100 & 200 \\
			\hline
			$tdKS$ & 0.046 & 0.049 & 0.044 & 0.048 & 0.052 \\
			$tdCvM$ & 0.051 & 0.044 & 0.053 & 0.048 & 0.056 \\
			$zdKS$ & 0.050 & 0.045 & 0.052 & 0.054 & 0.053 \\
			$zdCvM$ & 0.053 & 0.050 & 0.056 & 0.055 & 0.047\\
			\hline
		\end{tabular}
	\end{table}
	\begin{table}
		\centering
		\caption{ {Empirical power of tests for $N_2(\mathbf{0},I_2)$ at level $0.05$.}}
		\begin{tabular}{c c c c c c c }
			\hline
			Alternative & Test &n=10 & 25 & 50 & 100 & 200 \\
			\hline
			$N_2(\mu,I_2)$ & $tdKS$ & 0.533 & 0.858 & 0.989 & 1.000 & 1.000 \\
			& $tdCvM$ & 0.598 & 0.911 & 0.997 & 1.000 & 1.000 \\
			& $zdKS$ & 0.535 & 0.862 & 0.990 & 1.000 & 1.000 \\
			& $zdCvM$ & 0.595 & 0.911 & 0.997 & 1.000 & 1.000 \\
			& $MKST$ & 0.962 & 1.000 & 1.000 & 1.000 & 1.000 \\
			\hline
			$N_2(\textbf{0},\Sigma)$& $tdKS$ & 0.180 & 0.353 & 0.584 & 0.890 & 0.992 \\
			& $tdCvM$ & 0.212 & 0.409 & 0.676 & 0.921 & 0.997 \\
			& $zdKS$ & 0.181 & 0.349 & 0.590 & 0.893 & 0.992 \\
			& $zdCvM$ & 0.211 & 0.404 & 0.677 & 0.922 & 0.997 \\
			& $MKST$ & 0.111 & 0.121 & 0.165 & 0.318 & 0.600 \\
			\hline
			$N_2(\mu,\Sigma)$ & $tdKS$ & 0.671 & 0.964 & 1.000 & 1.000 & 1.000 \\
			& $tdCvM$ & 0.722 & 0.988 & 1.000 & 1.000 & 1.000 \\
			& $zdKS$ & 0.676 & 0.963 & 1.000 & 1.000 & 1.000 \\
			& $zdCvM$ & 0.726 & 0.987 & 1.000 & 1.000 & 1.000 \\
			& $MKST$ & 0.937 & 1.000 & 1.000 & 1.000 & 1.000 \\
			\hline
			$t_1(\textbf{0},I_2)$& $tdKS$ & 0.420 & 0.870 & 1.000 & 1.000 & 1.000 \\
			& $tdCvM$ & 0.476 & 0.850 & 0.996 & 1.000 & 1.000 \\
			& $zdKS$ & 0.420 & 0.868 & 0.998 & 1.000 & 1.000 \\
			& $zdCvM$ & 0.478 & 0.850 & 0.996 & 1.000 & 1.000 \\
			& $MKST$ & 0.158 & 0.374 & 0.668 & 0.978 & 1.000 \\
			\hline
			$t_5(\textbf{0},I_2)$ & $tdKS$ & 0.112 & 0.168 & 0.220 & 0.436 & 0.756 \\
			& $tdCvM$ & 0.118 & 0.202 & 0.246 & 0.446 & 0.760 \\
			& $zdKS$ & 0.114 & 0.172 & 0.226 & 0.426 & 0.754 \\
			& $zdCvM$ & 0.118 & 0.200 & 0.244 & 0.442 & 0.762 \\
			& $MKST$ & 0.086 & 0.082 & 0.056 & 0.108 & 0.128 \\
			\hline
		\end{tabular}
	\end{table}
	\noindent
	\textit{Example 2:} Morgenstern's system of bivariate distributions with marginal distributions $F$ and $G$ is given by
	$$\mathcal{F}(x,y;\theta)=F(x)G(y)[1+\theta \{1-F(x)\}\{1-G(y)\}],\ |\theta|\leq 1.$$
	Here we consider Morgenstern's bivariate distribution with uniform marginals and $\theta=0,$ and $0.50$ as null distributions. Empirical powers of proposed tests are computed against Morgenstern's bivariate alternatives with Beta marginals and compared with those of the multivariate KS test.
	
	Empirical Type-I error rate are given in {Table 3} and empirical powers for various alternatives are given in {Tables 4-5}. $U[0,1]^2$ corresponds to Morgenstern's bivariate distribution with uniform marginals and $\theta=0$. In this case, we compare empirical powers of the proposed tests with MKS test and ECF based GoF test. Empirical power of MKST is computed using the algorithm provided by \cite{ks1997}. {\cite{cfgof2009}} discussed ECF based GoF test and provided algorithm for the test when underlying null distribution is Morgenstern's bivariate distribution, we use this algorithm to the compute empirical powers. For this numerical power study, we consider bivariate Morgenstern distribution with $U[0,\ 1]$ marginals as the null distribution, and alternatives as Morgenstern's bivariate distributions with marginals $Beta(a_1,b_1)$ and $Beta(a_2,b_2)$. We have computed Type-I error rates and numerical powers for $\theta\in\ \{0,0.25,0.50,0.75\}$ and $a_1=a_2=b_1=b_2=0.5$; $a_1=a_2=b_1=b_2=0.5$; and $a_1=a_2=2,$ and $b_1=b_2=3$.
	
	In this case, the Type-I error rate remains close to the nominal value $0.05$ as well. For all alternatives, the proposed tests perform better than MKS test and ECF based GoF test. The distribution $Beta(1.5,1.5)$ is close to $U[0,1]$, for the alternative with marginals $Beta(1.5,1.5)$, MKS test and ECF based GoF test have very low power in small sample case but the proposed tests show satisfactory power in this case also. {\cite{cfgof2009}} had observed that ECF based GoF tests may not be unbiased in the present test when the alternative has marginals $Beta(1.5,1.5)$ However, the proposed tests do not suffer from such anomaly.
	
	\begin{table}
		\centering
		\caption{ {Estimated Type-I error probability for testing Morgenstern's bivariate distribution with uniform marginals and different $\theta $ at level $0.05$.}}
		\begin{tabular}{c c c c c c c}
			\hline
			&Test & n=10 & 25 & 50 & 100 & 200 \\
			\hline
			$\theta$ =\ 0 &$tdKS$ & 0.037 & 0.041 & 0.046 & 0.056 & 0.04 \\
			&$tdCvM$ & 0.042 & 0.038 & 0.054 & 0.05 & 0.054 \\
			&$zdKS$ & 0.039 & 0.047 & 0.045 & 0.057 & 0.042 \\
			&$zdCvM$ & 0.041 & 0.043 & 0.054 & 0.051 & 0.055 \\
			&$ECFT$ & 0.053 & 0.048 & 0.051 & 0.046 & 0.041 \\
			\hline
			$\theta$ =\ 0.50 &$tdKS$ & 0.042 & 0.042 & 0.058 & 0.052 & 0.054 \\
			&$tdCvM$ & 0.040 & 0.038 & 0.066 & 0.058 & 0.060 \\
			&$zdKS$ & 0.044 & 0.044 & 0.062 & 0.056 & 0.056 \\
			&$zdCvM$ & 0.038 & 0.036 & 0.064 & 0.056 & 0.056 \\
			&$ECFT$ & 0.058 & 0.060 & 0.062 & 0.028 & 0.036 \\
			\hline
		\end{tabular}
	\end{table}
	\begin{table}
		\centering
		\caption{ {Empirical power of tests for $U[0,1]^2$ at level $0.05$ for Morgenstern's bivariate alternatives with $beta(a_1,b_1)$ \& $beta(a_2,b_2)$ marginals and $\theta = 0$.}}
		\begin{tabular}{c c c c c c c }
			\hline
			Alternative & Test &n=10 & 25 & 50 & 100 & 200 \\
			\hline
			$a_1=a_2=0.5$ & $tdKS$ & 0.574 & 0.874 & 0.994 & 1.000 & 1.000 \\
			$b_1=b_2=0.5$ & $tdCvM$ & 0.652 & 0.910 & 1.000 & 1.000 & 1.000 \\
			& $zdKS$ & 0.574 & 0.878 & 0.996 & 1.000 & 1.000 \\
			& $zdCvM$ & 0.650 & 0.910 & 1.000 & 1.000 & 1.000 \\
			& $ECFT$ & 0.172 & 0.352 & 0.744 & 0.984 & 1.000 \\
			& $MKST$ & 0.198 & 0.338 & 0.680 & 0.944 & 1.000 \\
			\hline
			$a_1=a_2=1.5$ & $tdKS$ & 0.190 & 0.440 & 0.768 & 0.972 & 1.000 \\
			$b_1=b_2=1.5$ & $tdCvM$ & 0.208 & 0.492 & 0.808 & 0.990 & 1.000 \\
			& $zdKS$ & 0.188 & 0.448 & 0.766 & 0.976 & 1.000 \\
			& $zdCvM$ & 0.212 & 0.492 & 0.810 & 0.990 & 1.000 \\
			& $ECFT$ & 0.036 & 0.052 & 0.124 & 0.406 & 0.890 \\
			& $MKST$ & 0.036 & 0.060 & 0.144 & 0.328 & 0.792 \\
			\hline
			$a_1=a_2=2$ & $tdKS$ & 0.460 & 0.918 & 0.996 & 1.000 & 1.000 \\
			$b_1=b_2=3$ & $tdCvM$ & 0.538 & 0.958 & 1.000 & 1.000 & 1.000 \\
			& $zdKS$ & 0.464 & 0.932 & 0.996 & 1.000 & 1.000 \\
			& $zdCvM$ & 0.536 & 0.958 & 1.000 & 1.000 & 1.000 \\
			& $ECFT$ & 0.342 & 0.932 & 1.000 & 1.000 & 1.000 \\
			& $MKST$ & 0.430 & 0.976 & 1.000 & 1.000 & 1.000 \\
			\hline
		\end{tabular}
	\end{table}
	\begin{table}
		\centering
		\caption{ {Empirical power of tests for Morgenstern's bivariate alternatives with $U[0,1]$ marginals, $\theta = 0.50$ at level $0.05$ for Morgenstern's bivariate alternatives with $beta(a_1,b_1)$ \& $beta(a_2,b_2)$ marginals and $\theta = 0.50$.}}
		\begin{tabular}{c c c c c c c }
			\hline
			Alternative & Test &n=10 & 25 & 50 & 100 & 200 \\
			\hline
			$a_1=a_2=0.5$ & $tdKS$ & 0.544 & 0.894 & 0.998 & 1.000 & 1.000 \\
			$b_1=b_2=0.5$ & $tdCvM$ & 0.610 & 0.940 & 1.000 & 1.000 & 1.000 \\
			& $zdKS$ & 0.544 & 0.896 & 0.998 & 1.000 & 1.000 \\
			& $zdCvM$ & 0.614 & 0.940 & 1.000 & 1.000 & 1.000 \\
			& $ECFT$ & 0.170 & 0.194 & 0.270 & 0.302 & 0.426 \\
			& $MKST$ & 0.236 & 0.372 & 0.624 & 0.954 & 1.000 \\
			\hline
			$a_1=a_2=1.5$ & $tdKS$ & 0.196 & 0.430 & 0.758 & 0.966 & 0.998 \\
			$b_1=b_2=1.5$ & $tdCvM$ & 0.222 & 0.522 & 0.840 & 0.986 & 0.998 \\
			& $zdKS$ & 0.202 & 0.432 & 0.760 & 0.968 & 0.998 \\
			& $zdCvM$ & 0.232 & 0.524 & 0.844 & 0.986 & 0.998 \\
			& $ECFT$ & 0.032 & 0.048 & 0.100 & 0.126 & 0.282 \\
			& $MKST$ & 0.044 & 0.068 & 0.088 & 0.268 & 0.714 \\
			\hline
			$a_1=a_2=2$ & $tdKS$ & 0.558 & 0.918 & 1.000 & 1.000 & 1.000 \\
			$b_1=b_2=3$ & $tdCvM$ & 0.634 & 0.954 & 1.000 & 1.000 & 1.000 \\
			& $zdKS$ & 0.550 & 0.914 & 1.000 & 1.000 & 1.000 \\
			& $zdCvM$ & 0.642 & 0.960 & 1.000 & 1.000 & 1.000 \\
			& $ECFT$ & 0.088 & 0.456 & 0.874 & 0.998 & 1.000 \\
			& $MKST$ & 0.352 & 0.936 & 1.000 & 1.000 & 1.000 \\
			\hline
		\end{tabular}
	\end{table}
	\noindent
	\textit{Example 3:} Here, we consider $N_5(\textbf{0}_5,I_5)$ as the null distribution. Empirical powers are computed against alternatives $N_5(\mu_1,I_5)$, $N_5(\textbf{0}_5,1.5I_5)$, $N_5(\mu_1,1.5I_5)$, $t_1(\textbf{0}_5,I_5)$ and $t_5(\textbf{0}_5,I_5)$, where $\textbf{0}_5$ is the 5-dimensional column vector with zero entries, $\mu_1$ is the 5-dimensional column vector with entries one and $I_5$ is the 5-dimensional identity matrix.
	
	Estimated Type-I error rates for proposed tests are given in {Table 6}, which remain close to the nominal value. Estimated powers against aforementioned alternatives are given in {Table 7}. No algorithm is available when dimension is greater than two for either MKS, or ECF based multivariate GoF tests. So, we have not compared empirical power of proposed tests with any other test. Numerical power shows that the proposed tests continue to have satisfactory power for this case as well.
	\begin{table}
		\centering
		\caption{Estimated Type-I error probability for testing $N_5(\textbf{0}_5, I_5)$, $N=5000$ at level $0.05$.}
		\begin{tabular}{c c c c c c }
			\hline
			Test & n=10 & 25 & 50 & 100 & 200 \\
			\hline
			$tdKS$ & 0.042 & 0.054 & 0.042 & 0.040 & 0.050 \\
			$tdCvM$ & 0.040 & 0.058 & 0.046 & 0.054 & 0.053 \\
			$zdKS$ & 0.052 & 0.052 & 0.043 & 0.048 & 0.056 \\
			$zdCvM$ & 0.058 & 0.056 & 0.045 & 0.052 & 0.054 \\
			\hline
		\end{tabular}
	\end{table}
	\begin{table}
		\centering
		\caption{Empirical power of tests for $N_2(\textbf{0}_5,I_5)$, $N=5000$ at level $0.05$.}
		\begin{tabular}{c c c c c c c }
			\hline
			Alternative & Test &n=10 & 25 & 50 & 100 & 200 \\
			\hline
			$N_5(\mu,I_5)$ & $tdKS$ & 0.848 & 0.996 & 1.000 & 1.000 & 1.000 \\
			& $tdCvM$ & 0.924 & 0.998 & 1.000 & 1.000 & 1.000 \\
			& $zdKS$ & 0.844 & 0.996 & 1.000 & 1.000 & 1.000 \\
			& $zdCvM$ & 0.922 & 0.998 & 1.000 & 1.000 & 1.000 \\
			\hline
			$N_5(0_5, 1.5I_5)$ & $tdKS$ & 0.314 & 0.704 & 0.964 & 1.000 & 1.000 \\
			& $tdCvM$ & 0.424 & 0.812 & 0.986 & 1.000 & 1.000 \\
			& $zdKS$ & 0.316 & 0.724 & 0.96 & 1.000 & 1.000 \\
			& $zdCvM$ & 0.432 & 0.818 & 0.984 & 1.000 & 1.000 \\
			\hline
			$N_5(\mu,1.5I_5)$ & $tdKS$ & 0.950 & 1.000 & 1.000 & 1.000 & 1.000 \\
			& $tdCvM$ & 0.972 & 1.000 & 1.000 & 1.000 & 1.000 \\
			& $zdKS$ & 0.950 & 1.000 & 1.000 & 1.000 & 1.000 \\
			& $zdCvM$ & 0.976 & 1.000 & 1.000 & 1.000 & 1.000 \\
			\hline
			$t_1(0_5, I_5)$ & $tdKS$ & 0.614 & 0.976 & 1.000 & 1.000 & 1.000 \\
			& $tdCvM$ & 0.65 & 0.976 & 1.000 & 1.000 & 1.000 \\
			& $zdKS$ & 0.606 & 0.982 & 1.000 & 1.000 & 1.000 \\
			& $zdCvM$ & 0.648 & 0.976 & 1.000 & 1.000 & 1.000 \\
			\hline
			$t_5(0_5, I_5)$ & $tdKS$ & 0.174 & 0.33 & 0.624 & 0.916 & 1.000 \\
			& $tdCvM$ & 0.202 & 0.356 & 0.6 & 0.884 & 1.000 \\
			& $zdKS$ & 0.172 & 0.368 & 0.712 & 0.968 & 1.000 \\
			& $zdCvM$ & 0.212 & 0.378 & 0.616 & 0.892 & 1.000 \\
			\hline
			
		\end{tabular}
	\end{table}
	\vspace*{-0.1in}
	\subsection{Two sample tests}
	We now compare the proposed two sample tests with tests proposed by \cite{liu2006} and \cite{li2016}. For this simulation study, we take $m=n=100$. Tests proposed by \cite{liu2006} and \cite{li2016} are denoted by $\tilde{dW}$ and $\tilde{dS}$, respectively. The test $\tilde{dS}$ is based on bootstrap, and $1000$ bootstrap samples  are considered for the testing purpose.\\~\\
	\textit{Example 4:} In this example, we consider $F=N\left(\mathbf{0},I_2\right)$ and different $G$. For different alternatives, Table 8 gives the empirical powers of the tests. Here, $L(\mathbf{\mu}, \Sigma)$ denotes bivariate Laplace distribution with location parameter $\mathbf{\mu}$ and scale parameter $\Sigma$, and $t_d(\mathbf{\mu},\Sigma)$ denotes the multivariate t distribution with location parameter vector $\mathbf{\mu}$, scale parameter matrix $\Sigma$ and degrees of freedom $d$, where $\mathbf{\mu}\in\mathbb{R}^2$ and $\Sigma>0$ is $2\times2$ matrix.
	\begin{table}
		\centering
		\caption{Empirical powers of the tests for two sample tests at level $0.05$ with $F=N\left(\mathbf{0},I_2\right)$ and different $G$.}
		\begin{tabular}{llllll}
			\hline
			& \multicolumn{5}{c}{Tests} \\
			\cmidrule(lr){2-6}
			$G$& $\tilde{dKS}$ & $\tilde{dCvM}$ & $\tilde{dAD}$ & $\tilde{dW}$ & $\tilde{dS}$ \\ \hline
			$N(\mathbf{0}, I_2)$ & 0.051 & 0.050 & 0.053 & 0.054 & 0.052 \\
			$N(\mathbf{0}, 1.5I_2)$ & 0.580 & 0.609 & 0.600 & 0.654 & 0.740 \\
			$N(\mathbf{0}, 2I_2)$ & 0.957 & 0.972 & 0.974 & 0.983 & 1.000 \\ 
			$L(\mathbf{0}, I_2)$ & 0.728 & 0.851 & 0.885 & 0.650 & 0.652 \\
			$t_2(\mathbf{0}, I_2)$ & 0.807 & 0.734 & 0.850 & 0.638 & 0.835 \\
			\hline
		\end{tabular}
	\end{table}
	The first row of Table 8 shows that for all the tests, Type-I error rates are close to the nominal value. For normal alternatives, the test $\tilde{dS}$ performs better than other tests. For larger dispersion (or, heavy tailed alternatives), some of the proposed tests perform either the best or close to  $\tilde{dS}$. Specially performance of the test $\tilde{dAD}$ appears good for all heavy tailed alternatives.\\~\\~\\
	\textit{Example 5:} In this example, we take $F=t_1\left(\mathbf{0},I_2\right)$ and different $G$. For different alternatives, Table 9 gives the empirical powers of the tests.
	\begin{table}
		\centering
		\caption{Empirical powers of the tests for two sample tests at level $0.05$ with $F=t_1\left(\mathbf{0},I_2\right)$ and different $G$.}
		\begin{tabular}{llllll}
			\hline
			& \multicolumn{5}{c}{Tests} \\
			\cmidrule(lr){2-6}
			$G$& $\tilde{dKS}$ & $\tilde{dCvM}$ & $\tilde{dAD}$ & $\tilde{dW}$ & $\tilde{dS}$ \\ \hline
			$t_1(\mathbf{0}, I_2)$ & 0.046 & 0.05 & 0.052 & 0.05 & 0.051 \\
			$t_1(\mathbf{0},3I_2)$ & 0.855 & 0.913 & 0.914 & 0.902 & 0.995 \\ 
			$t_2(\mathbf{0}, I_2)$ & 0.423 & 0.474 & 0.559 & 0.397 & 0.674\\
			$t_5(\mathbf{0}, I_2)$ & 0.941 & 0.927 & 0.965 & 0.829 & 0.991\\
			$L(\mathbf{0}, I_2)$ & 0.995 & 0.997 & 0.999 & 0.993 & 0.999 \\
			\hline
		\end{tabular}
	\end{table}
	The first row of Table 8 indicates that Type-I error rates for all the tests are close to the nominal value.	For all alternatives, $\tilde{dCVM}$ and $\tilde{dAD}$ tests performances are close to  the best test  $\tilde{dS}$. So, considering time required for the tests, the tests $\tilde{dCVM}$ and $\tilde{dAD}$ should be preferred.
	
	Our simulation study suggests that proposed tests can perform reasonably well when the underlying distribution of one (or, both) samples are heavy-tailed. The test $\tilde{dS}$ performs the best most of the time but, it takes much more time to execute as compared to the other tests. Moreover, if there is high kurtosis difference between distributions of the samples (e.g., $N(\mathbf{0}, I_2)$ vs. $L(\mathbf{0}, I_2)$),  the test $\tilde{dAD}$ is performs better than all other considered tests.
	
	\section{Real data example}
	We now illustrate the performances of two sample tests on a classical tooth growth data set. This data set is available in R, under the library ``ToothGrowth".
	The data consists of the length of odontoblasts (cells responsible for tooth growth) in $60$ guinea pigs. There were three vitamin C dose levels and two delivery methods (VC and OJ). Each of the pigs received one of three dose levels of vitamin C by one of two delivery methods. There are $30$ observations corresponding to each delivery method. If we consider each delivery method corresponds to a population, then the data is suitable for our two sample test. From the depth-depth plot of the data (see Figure \ref{fig1}), it is evident that these two samples have difference in scales. We have used the proposed two sample tests to investigate whether these tests are able to detect this difference. Tukey depth is used for the analysis. The observed $p$-value of the tests are reported in Table 12. Since one co-ordinate of this data is discrete (which results into ties of computed depths), we used permutation method to compute the $p$-values.
	\begin{figure}[h]
		\centering
		\includegraphics[width=8cm,height=7cm]{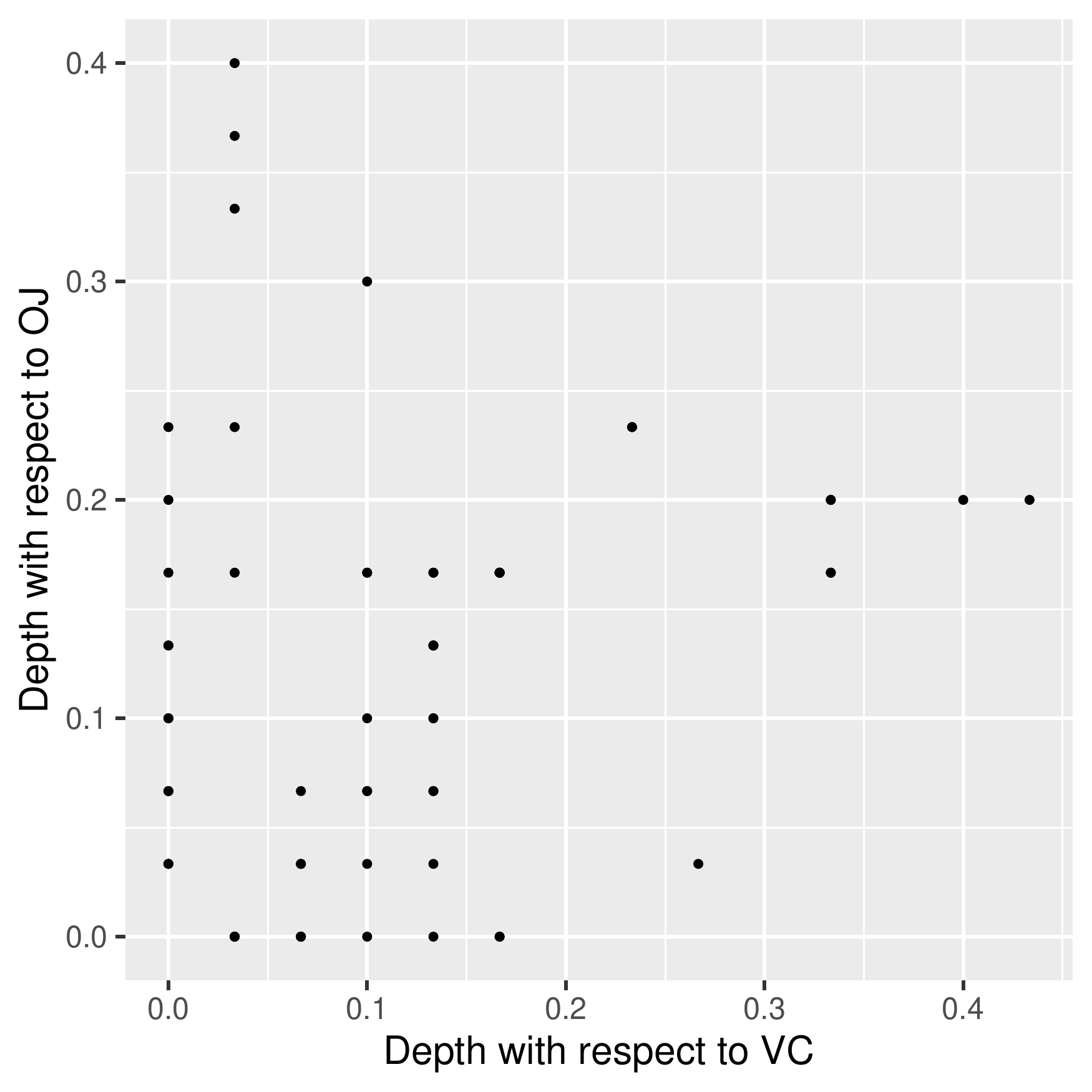}
		\caption{Depth-depth plot of the ``ToothGrowth" data.}\label{fig1}
	\end{figure}
	\begin{table}[h]
		\centering
		\caption{$p$-values of different tests for the ``ToothGrowth" data.}
		\begin{tabular}{llllll} \hline
			& \multicolumn{5}{c}{Tests} \\ \cmidrule(lr){2-6}
			& $\tilde{dKS}$ & $\tilde{dCvM}$ & $\tilde{dAD}$ & $\tilde{dW}$ & $\tilde{dS}$ \\ \cmidrule(lr){2-6}
			$p$-value & 0.253 & 0.115 & 0.03 & 0.40 & 0.394 \\
			\hline
		\end{tabular}
	\end{table}
	\FloatBarrier
	It is evident from the result in Table 10 that the proposed tests are quite useful. Different two sample test statistics measures different kind of departure of the populations. Hence, different tests are useful in different situations. One would need to apply a battery of tests in a given situation.
	\section{Concluding remarks}
	In this paper, we have studied several new multivariate GoF tests for continuous distribution functions. The proposed tests are based on notions of the popular Tukey's half-space and zonoid depths.
	For testing bivariate normality, $dKS$ and $dCvM$ tests perform better than the MKS test when there is a difference in scale (or, alternative is heavy-tailed). For location difference the MKS test performs better than $dKS$ and $dCvM$ tests for small sample size, while for large sample powers are comparable. Also, for testing Morgenstern's bivariate distribution with $U[0,1]$ marginals, $dKS$ and $dCvM$ tests perform better than MKS and ECF based GoF tests.
	For dimension greater than two, algorithm for implementation of MKS and ECF based GoF tests are not available in literature. Thus, implementation of these test procedures are difficult. Our proposed tests are applicable in higher dimensions and the algorithm is exactly the same. For testing pentavariate normality, the proposed tests have satisfactory powers.
	Empirical powers of the proposed GoF tests (KS and CvM) based on Tukey and zonoid depths are similar, and it can not be concluded that GoF test based on one data depth is superior to one based on the other data depth. But, the performance of CvM test based on data depth (half space or zonoid) is superior to KS test based on the data depth.
	
	We have also discussed some new two sample tests based on data depth for multivariate distributions. These tests are distribution-free under the null hypothesis. Simulation study suggests that the proposed tests are useful against some popular competitors for heavy-tailed distributions. Performance of the test $\tilde{dAD}$ is second best after the test $\tilde{dS}$. The test $\tilde{dS}$ is based on bootstrap, and such a test requires much more time to execute. The proposed tests do not suffer from such a disadvantage. When there is high kurtosis difference between distributions of the samples,  the test $\tilde{dAD}$ is performs better than all other considered tests.
	A real data example that conveys the same is presented. 
	\section*{Appendix}
	\begin{thm}[\cite{chung1949}]\label{lil}
		Let $F$ is continuous distribution function on $\mathbb{R}$ and $F_N$ be empirical distribution function corresponding to a random sample of size $N$ from $F$, then
		$$\limsup_{N\to \infty}\ \sup_{x\in \mathbb{R}}\ \sqrt{\frac{N}{\log \log N}}\ \big|F_N(x)-F(x)\big| \leq 1.$$
	\end{thm}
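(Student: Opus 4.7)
The plan is to reduce to the uniform case via the probability integral transform, then apply an exponential tail bound for the Kolmogorov--Smirnov statistic, and finally combine Borel--Cantelli along a geometric subsequence with a maximal inequality to fill in the gaps between subsequence points.

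First, because $F$ is continuous, the transformed variables $U_i := F(X_i)$ are i.i.d.\ $U[0,1]$, and writing $G_N$ for their empirical CDF we have the almost sure identity $\sup_{x\in\mathbb{R}} |F_N(x)-F(x)| = \sup_{t\in[0,1]} |G_N(t)-t|$. Hence it suffices to prove the LIL bound for the uniform empirical process $\beta_N(t) := \sqrt{N}\bigl(G_N(t)-t\bigr)$.

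Second, I would invoke the Dvoretzky--Kiefer--Wolfowitz inequality,
$$P\Bigl(\sup_{t\in[0,1]} |G_N(t)-t| > \epsilon\Bigr) \leq 2\exp(-2N\epsilon^2),$$
with $\epsilon_N := (1+\delta)\sqrt{\log\log N / N}$ for fixed $\delta>0$. The tail probability at $\epsilon_N$ is bounded by $2(\log N)^{-2(1+\delta)^2}$. Summed along a geometric subsequence $N_k := \lfloor \rho^k\rfloor$ with $\rho>1$, these probabilities are summable in $k$, so Borel--Cantelli gives $\sqrt{N_k/\log\log N_k}\,\sup_t|G_{N_k}(t)-t| \leq 1+\delta$ eventually almost surely.

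Third, to extend the bound from the subsequence $\{N_k\}$ to every $N$, I would use monotonicity of $N\mapsto N G_N(t)$ combined with the elementary observation that for $N_k\leq N\leq N_{k+1}$,
$$\sup_t |G_N(t)-t| \leq \frac{N_{k+1}}{N_k}\sup_t |G_{N_{k+1}}(t)-t| + \frac{N_{k+1}-N_k}{N_k}.$$
Since $N_{k+1}/N_k\to \rho$, the first term is inflated only by the factor $\rho$ asymptotically, and the second is $O(\rho-1)$; choosing $\rho$ close to $1$ and then letting $\delta\downarrow 0$ yields the stated bound with constant $1$ on the right-hand side (the sharper Chung--Smirnov constant $1/\sqrt{2}$ would require a refined subsequence and two-sided control, but is not needed here).

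The main technical obstacle is the third step: one needs a good handle on fluctuations of $G_N$ between subsequence points so that the geometric blow-up does not swallow the log--log denominator. An Ottaviani-type maximal inequality (or the monotone bound above with a careful choice of $\rho$) is what makes this step work; without it the DKW plus Borel--Cantelli argument only controls the subsequence, not the full limsup.
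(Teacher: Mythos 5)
The paper does not actually prove this statement---it is imported verbatim from Chung (1949) as a known law of the iterated logarithm for the empirical process---so there is no in-paper argument to compare against; your attempt has to stand on its own. Steps one and two are fine: the reduction to the uniform case via the probability integral transform is standard for continuous $F$, and the Dvoretzky--Kiefer--Wolfowitz bound with $\epsilon_N=(1+\delta)\sqrt{\log\log N/N}$ does give tail probabilities $2(\log N_k)^{-2(1+\delta)^2}$ that are summable along $N_k=\lfloor\rho^k\rfloor$, so Borel--Cantelli controls the subsequence.

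The gap is in step three. The monotone interpolation
$\sup_t|G_N(t)-t|\le \tfrac{N_{k+1}}{N_k}\sup_t|G_{N_{k+1}}(t)-t|+\tfrac{N_{k+1}-N_k}{N_k}$
carries an additive error of order $\rho-1$, which is a \emph{constant} in $N$, whereas the quantity you are bounding must be shown to be $O\bigl(\sqrt{\log\log N/N}\bigr)$. Multiplying by $\sqrt{N/\log\log N}$ sends that additive term to infinity, so the bound is vacuous for every fixed $\rho>1$; ``choosing $\rho$ close to $1$'' does not rescue it. Nor can you let the blocks shrink with $k$: to make $N_{k+1}-N_k=o\bigl(\sqrt{N_k\log\log N_k}\bigr)$ you would need at least $\sqrt{N/\log\log N}$ blocks up to level $N$, which forces $\log N_k\asymp\log k$ and makes the Borel--Cantelli series $\sum_k(\log N_k)^{-2(1+\delta)^2}$ diverge. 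The deterministic bound is also genuinely tight: the $N_{k+1}-N_k\approx(\rho-1)N_k$ new observations can all land in one small interval and shift the empirical CDF by $(\rho-1)$ in sup norm. The only way through is the probabilistic route you mention in passing but do not execute: apply an Ottaviani-type maximal inequality to the $\ell^\infty$-valued random walk $\alpha_n(t)=n(G_n(t)-t)$ within each block, using DKW on the increments $\alpha_{N_{k+1}}-\alpha_n\stackrel{d}{=}\alpha_{N_{k+1}-n}$ to bound the denominator, so that the maximum over the block exceeds the endpoint value only by a multiplicative $(1+o(1))$ factor. Presenting the monotone bound as an interchangeable alternative to Ottaviani is the error; the two are not equivalent, and only the maximal-inequality version closes the proof. (A minor additional point: even the one-sided interpolation you wrote needs both endpoints $G_{N_k}$ and $G_{N_{k+1}}$ to control the absolute value.)
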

	
	\begin{lemma}
		Under assumptions (A1), if $n= o\left({\dfrac{N}{\log \log N}}\ \right)$ then \label{lemma1}
		\begin{small}
			$$||\mathbf{F}^D_{N,n}-\pmb(F^{D}_N(D(X_1)),F^{D}_N(D(X_2)),\ldots,F^{D}_N(D(X_n))\pmb)||\xrightarrow[N\to \infty]{a.s.}\ 0.$$
		\end{small}
	\end{lemma}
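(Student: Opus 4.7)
The plan is to reduce the problem to a uniform-rate Glivenko–Cantelli estimate for $F^{D}_N$ and then control the Euclidean norm componentwise. First, observe that under assumption (A1) the cdf $F^{D}$ of $D(W_1)$ is continuous, and since $W_1,\dots,W_N$ are iid from $F_0$, the values $D(W_1),\dots,D(W_N)$ are iid with common continuous cdf $F^{D}$, and $F^{D}_N$ is exactly their empirical cdf. Theorem \ref{lil} therefore applies to $F^{D}_N$, giving
$$\limsup_{N\to\infty}\sqrt{\frac{N}{\log\log N}}\,\sup_{x\in\mathbb{R}}\bigl|F^{D}_N(x)-F^{D}(x)\bigr|\leq 1\quad\text{a.s.}$$

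Next, bound the target norm componentwise. Because each $D(X_i)$ is a (random) real number and the $W_j$'s are independent of the $X_i$'s, for every $i$ we trivially have
$$\bigl|F^{D}(D(X_i))-F^{D}_N(D(X_i))\bigr|\leq \sup_{x\in\mathbb{R}}\bigl|F^{D}(x)-F^{D}_N(x)\bigr|,$$
so summing the squared components gives
$$\Bigl\|\mathbf{F}^{D}_{N,n}-\bigl(F^{D}_N(D(X_1)),\dots,F^{D}_N(D(X_n))\bigr)\Bigr\|^{2}\leq n\cdot\sup_{x\in\mathbb{R}}\bigl|F^{D}(x)-F^{D}_N(x)\bigr|^{2}.$$
Combining with the Chung bound, for every $\varepsilon>0$ and all sufficiently large $N$ (on an almost-sure event that does not depend on the $X_i$'s), the right-hand side is at most $(1+\varepsilon)^{2}\,n\log\log N/N$. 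The hypothesis $n=o(N/\log\log N)$ drives this to zero, and hence so does the Euclidean norm itself.

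I do not anticipate a real obstacle: the only delicate point is making sure the sup-norm bound from Chung's theorem can be evaluated at the random arguments $D(X_i)$, but this is automatic since the bound holds simultaneously for all $x\in\mathbb{R}$ on a single almost-sure event determined by $W_1,W_2,\ldots$, independently of the $X_i$'s. The rest is just the inequality $\|v\|^{2}\le n\,\|v\|_{\infty}^{2}$ and plugging in the rate.
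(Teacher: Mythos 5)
Your proof is correct and follows essentially the same route as the paper's: bound the Euclidean norm by $\sqrt{n}$ times the Kolmogorov sup-distance between $F^D$ and $F^D_N$, then invoke Chung's law of the iterated logarithm (Theorem \ref{lil}) together with the hypothesis $n=o(N/\log\log N)$ to drive the bound to zero. Your write-up is somewhat more careful than the paper's on two points it leaves implicit --- that (A1) is what licenses applying Theorem \ref{lil} to the continuous cdf $F^D$, and that the sup-norm bound holds on a single almost-sure event determined by the $W_j$'s alone so it can be evaluated at the random arguments $D(X_i)$ --- but the argument is the same.
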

	\begin{proof}
		Observe that,
		\begin{align*}
			&||\mathbf{F}^D_{N,n}-\pmb(F^{D}_N(D(X_1)),F^{D}_N(D(X_2)),\ldots,F^{D}_N(D(X_n))\pmb)||\\
			\leq&~ \sqrt{n} \times \underset{D(y):x\in\mathbb{R}^d}{\sup}\ |F^D(D(x)) - F^{D}_N(D(x))|\\
			&\xrightarrow[N\to \infty]{\text{a.s.}}\ 0,\ \text{if } n= o\left({\dfrac{N}{\log\ \log N}}\ \right), \text{using Theorem \ref{lil} (stated above)} .
		\end{align*}
	\end{proof}
	\begin{lemma} Under assumptions (A1) and (A2), for any $y\in \mathbb{R}^d$ \label{lemma2}
		$$\big| G_N(x)-F_N^D(D(x))\big| \xrightarrow[N\to \infty]{a.s.} \ 0.$$
	\end{lemma}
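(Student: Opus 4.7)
The plan is to sandwich $G_N(x)$ between the empirical CDF values $F_N^D(D(x) \pm \varepsilon)$ using the uniform convergence supplied by (A2), then shrink $\varepsilon$ to $0$ using the continuity in (A1).

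First I would fix $\varepsilon > 0$ and invoke (A2), which yields $\Delta_N := \sup_{z \in \mathbb{R}^d} |D_N(z) - D(z)| \to 0$ almost surely. Hence on the tail event $\{\Delta_N < \varepsilon/2\}$, which has full probability for all $N$ larger than a (random) $N_0$, one has simultaneously $|D_N(W_i) - D(W_i)| < \varepsilon/2$ for every $i$ and $|D_N(x) - D(x)| < \varepsilon/2$.

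The key observation is then a two-sided inclusion between indicator events. If $D_N(W_i) \leq D_N(x)$, the two perturbations compound to give $D(W_i) \leq D(x) + \varepsilon$; conversely, if $D(W_i) \leq D(x) - \varepsilon$, then $D_N(W_i) \leq D_N(x)$. Averaging the indicators over $i = 1, \dots, N$ produces
\[
F_N^D\bigl(D(x) - \varepsilon\bigr) \leq G_N(x) \leq F_N^D\bigl(D(x) + \varepsilon\bigr),
\]
and the same bracketing holds trivially for $F_N^D(D(x))$, so
\[
\bigl|G_N(x) - F_N^D(D(x))\bigr| \leq F_N^D\bigl(D(x) + \varepsilon\bigr) - F_N^D\bigl(D(x) - \varepsilon\bigr).
\]

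Next I would apply the SLLN to the i.i.d.\ bounded Bernoulli variables $\mathbb{I}(D(W_i) \leq t)$ at the two fixed thresholds $t = D(x) \pm \varepsilon$; this gives almost-sure convergence of the right-hand side to $F^D(D(x) + \varepsilon) - F^D(D(x) - \varepsilon)$. By (A1), $F^D$ is continuous at $D(x)$, so this gap can be made arbitrarily small by choosing $\varepsilon$ small. Since $\varepsilon > 0$ was arbitrary, the conclusion follows.

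The main obstacle is the sandwich step: one has to be careful that the perturbation of both $D_N(W_i)$ and $D_N(x)$ compounds, so the threshold shift must be set to $\varepsilon$ (rather than $\varepsilon/2$) to absorb both errors; setting the tail event as $\{\Delta_N < \varepsilon/2\}$ makes this clean. Once the sandwich is established, the SLLN and continuity steps are routine.
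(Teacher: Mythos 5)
Your proof is correct, but it takes a genuinely different (and in fact more careful) route than the paper. The paper argues termwise: it notes that for each fixed $i$ the difference $\mathbb{I}[D_N(W_i)\leq D_N(x)]-\mathbb{I}[D(W_i)\leq D(x)]$ tends to $0$ almost surely, using (A2) for the convergence of the depths and (A1) to rule out the boundary case $D(W_i)=D(x)$, and then passes directly to the average over $i=1,\dots,N$. That last step is delicate, since the number of summands grows with $N$ and termwise convergence of an array does not by itself control the row averages; the paper leaves this unaddressed. Your bracketing argument closes exactly this gap: on the event $\{\sup_z|D_N(z)-D(z)|<\varepsilon/2\}$ you control \emph{all} $N$ indicators simultaneously, obtaining $F_N^D(D(x)-\varepsilon)\leq G_N(x)\leq F_N^D(D(x)+\varepsilon)$, and then only the SLLN at the two fixed thresholds $D(x)\pm\varepsilon$ and the continuity of $F^D$ from (A1) are needed. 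The two-sided inclusion is verified correctly (the $\varepsilon/2$ budget for each of $D_N(W_i)$ and $D_N(x)$ compounding to a threshold shift of $\varepsilon$ is exactly right). The only cosmetic refinement worth adding is to let $\varepsilon$ run through a countable sequence decreasing to $0$ so that the union of exceptional null sets remains null; with that, your argument is a complete and rigorous substitute for the paper's proof.
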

	\begin{proof}
		Under the assumptions,
		$$\mathbb{I}[D_N(W_i)<\ D_N(x)]-\mathbb{I}[D(W_i)<\ D(x)]\xrightarrow[N\to \infty]{a.s.}\ 0.$$
		Also due to continuity,
		$$Pr[D(W_i)=D(x)]=0.$$
		
		Thus, we get,
		$$G_N(x)-F_N^D(D(x)) =\dfrac{1}{N}\sum_{i=1}^N\bigg\{\mathbb{I}[D_N(W_i)\leq\ D_N(x)]-\mathbb{I}[D(W_i)\leq\ D(x)]\bigg\}\xrightarrow[N\to \infty]{a.s.}\ 0.$$
		
	\end{proof}
	\begin{proof}[{Proof of Theorem 2}] Observe that
		\begin{align*}
			&||\mathbf{F}^D_{N,n}-\mathbf{G}^D_{N,n}||\\
			\leq& ||\mathbf{F}^D_{N,n}-\pmb(F^{D}_N(D(X_1)),F^{D}_N(D(X_2)),\ldots,F^{D}_N(D(X_n))\pmb)||\\
			&+ ||\pmb(F^{D}_N(D(X_1)),F^{D}_N(D(X_2)),\ldots,F^{D}_N(D(X_n))\pmb)-\mathbf{G}^D_{N,n}||\\
			&\xrightarrow[N\to \infty]{a.s.}\ 0,\ \text{due to Lemmas \ref{lemma1} and \ref{lemma2}.}
		\end{align*}
	\end{proof}
	\begin{proof}[{Proof of Theorem 3}]
		Let $u\in\ [0,1]$. Define $G_{an}(u):=\ \dfrac{1}{n}\sum_{i=1}^n\mathbb{I}[F^D(D(X_i))\leq u]$, \\
		and $H_n(u):=\ \dfrac{1}{n}\sum_{i=1}^n\mathbb{I}[G_N(X_i)\leq u]$.
		
		Observe that
		\begin{align*}
			\sqrt{n}\ \big|G_{an}(u)-u\big| & \leq \sqrt{n}\ \big|G_{an}(u)-H_n(u)\big| + \sqrt{n}\ \big|H_n(u)-u\big| \\
			& = \frac{1}{\sqrt{n}}\bigg|\sum_{i=1}^n\bigg\{\mathbb{I}[F^{D}(D(X_i))\leq u]-\mathbb{I}[G_N(X_i)\leq u]\bigg\}\bigg|
			+ \sqrt{n}\ \big|H_n(u)-u\big|
		\end{align*}
		\begin{align*}
			\Rightarrow\sqrt{n}\ \big|G_{an}(u)-u\big| - \sqrt{n}\ \big|H_n(u)-u\big| \leq\
			\frac{1}{\sqrt{n}}\bigg|\sum_{i=1}^n\bigg\{\mathbb{I}[F^{D}(D(X_i))\leq u]-\mathbb{I}[G_N(X_i)\leq u]\bigg\}\bigg|.
		\end{align*}
		Similarly, for any $u\in(0,1)$, we have
		\begin{align*}
			&\sqrt{n}\ |H_n(u)-u|- \sqrt{n}\ |G_{an}(u)-u| \leq\ \frac{1}{\sqrt{n}}\bigg|\sum_{i=1}^n\bigg\{\mathbb{I}[F^{D}(D(X_i))\leq u]-\mathbb{I}[G_N(X_i)\leq u]\bigg\}\bigg|\\
			&\Rightarrow \pmb{\big|}\sqrt{n}\ |H_n(u)-u|- \sqrt{n}\ |G_{an}(u)-u|\pmb{\big|} \leq\ \frac{1}{\sqrt{n}}\bigg|\sum_{i=1}^n\bigg\{\mathbb{I}[F^{D}(D(X_i))\leq u]-\mathbb{I}[G_N(X_i)\leq u]\bigg\}\bigg|.
		\end{align*}
		Using Theorem \ref{thm2} and arguments similar to proof of Lemma \ref{lemma2}, it can be shown that
		\begin{align*}
			&\frac{1}{\sqrt{n}}\bigg|\sum_{i=1}^n\bigg\{\mathbb{I}[F^{D}(D(X_i))\leq u]-\mathbb{I}[G_N(X_i)\leq u]\bigg\}\bigg|
			\xrightarrow[N\to \infty]{a.s.}0\\
			\Rightarrow& \pmb{\big|}\sqrt{n}\ |H_n(u)-u|- \sqrt{n}\ |G_{an}(u)-u|\pmb{\big|} \xrightarrow[N\to \infty]{a.s.}0.
		\end{align*}
		Using the continuous mapping theorem and Slutsky's lemma, the proof is complete for the KS test statistic. Proof for AD, CvM and Greenwood test statistics follows in a similar way.
	\end{proof}
	\begin{proof}[Proof Theorem 4]
		Observe that under $H_0$, the joint distribution of $(R_{11},R_{12},\ldots,R_{1n},R_{21},R_{22},\ldots,R_{2n})^T$ is uniform over all permutations of $\{1,2,\ldots,N\}$. Thus the proposed test statistics $\tilde{dKS}$, $\tilde{dCvM}$ and $\tilde{dAD}$ are distribution-free and have same distribution as the usual two sample $KS$, $CvM$ and $AD$ test statistics, respectively.
	\end{proof}
	\bibliographystyle{apa}
	\small
	\bibliography{bibpaper}
	
\end{document}